\newtheorem{theorem}{Theorem}[section]
\newtheorem{lemma}[theorem]{Lemma}
\newtheorem{proposition}[theorem]{Proposition}
\theoremstyle{definition}
\newtheorem{definition}[theorem]{Definition}
\newtheorem{example}[theorem]{Example}
\theoremstyle{remark}
\newtheorem{remark}[theorem]{Remark}
\begin{document}
\setcounter{page}{1}

\title[ Matricial Radius]{ Matricial Radius:\\
A Relation of Numerical radius with Matricial Range}
\author[  Mohsen
Kian,   Mahdi Dehghani  \MakeLowercase{and}  Mostafa Sattari ]{ M. Kian, M. Dehghani \MakeLowercase{and} M. Sattari}

\address{Mohsen Kian:\ \
 Department of Mathematics,  University of Bojnord, P. O. Box
1339, Bojnord 94531, Iran}
\email{\textcolor[rgb]{0.00,0.00,0.84}{kian@ub.ac.ir }}

\address{Mahdi Dehghani:\  Department of Pure Mathematics, Faculty of Mathematical Sciences,
University of Kashan, P. O. Box 87317-53153, Kashan, Iran}
\email{\textcolor[rgb]{0.00,0.00,0.84}{m.dehghani@kashanu.ac.ir \ and \ e.g.mahdi@gmail.com }}

\address{  Mostafa Sattari: \ Faculty of Basic Sciences, Department of Mathematics, University of Zabol, Zabol, Iran.}
\email{\textcolor[rgb]{0.00,0.00,0.84}{sattari@uoz.ac.ir \ and \ msattari.b@gmail.com }}

\subjclass[2010]{Primary 15A60 ; Secondary 47A12. }

\keywords{Numerical range, matricial range, completely positive mapping, numerical radius}


\begin{abstract}
It has been shown that if $T$ is a complex matrix, then
{\small\begin{align*}
\omega(T)&=\frac{1}{n}\sup\left\{|\mathrm{Tr}\ X|;\ X\in W^n(T)\right\}\\
&=\frac{1}{n}\sup\left\{\|X\|_1;\ X\in W^n(T)\right\}\\
&= \sup\left\{ \omega(X);\ X\in W^n(T)\right\}
\end{align*} }
 where $n$ is a positive integer, $\omega(T)$ is the numerical radius and  $W^n(T)$ is the  $n$'th matricial range of $T$.
 \end{abstract} \maketitle

\maketitle

\section{Introduction and Preliminaries}

One of the most well-known concept in study of
Hilbert space operators   is the notion of numerical range. Assume that  $(\mathscr{H},\langle\cdot,\cdot\rangle)$ is a Hilbert space and   $\mathbb{B}(\mathscr{H})$ is the $C^*$-algebra of all bounded linear operators on $\mathscr{H}$ with the identity operator $I$. When $\mathscr{H}$ has finite dimension $n$, we identify $\mathbb{B}(\mathscr{H})$ with the algebra $\mathbb{M}_n:=\mathbb{M}_n(\mathbb{C})$ of all $n\times n$ complex matrices and $I_n$ denotes the $n\times n$ identity matrix. The numerical range  of   $T\in\mathbb{B}(\mathscr{H})$ is well-known:
 $$W(T)= \{\langle Tx,x\rangle; \ \ x\in\mathscr{H},\ \|x\|=1\}.$$
 This set  is an important   tool which gives many information about  $T$, particularly about its eigenvalues and eigenspaces. The numerical range has a unique nature in numerical analysis and
differential equations. It has many desirable properties, which probably  the most famous of
them is the  Toeplitz-Hausdorff result. It asserts that $ W(T)$ is convex for every
$T\in\mathbb{B}(\mathscr{H})$, see e.g. \cite{GR}.  Moreover, the
basic properties of the numerical range of bounded linear operators on Hilbert
spaces can be found in [5]. We summarize some of them in the following theorem.\\
\textbf{Theorem A.}\cite{GR} \  For $T\in\mathbb{B}(\mathscr{H})$;
\begin{align*}
  &\mathrm{(i)}\ W(\alpha I+\beta T)=\alpha+ \beta W(T), \ \alpha,\beta\in\mathbb{C};\\
    &\mathrm{(ii)}\ W(U^*TU)=W(T), \ \mbox{for every unitary $U\in\mathbb{B}(\mathscr{H})$};\\
     &\mathrm{(iii)}\ sp(T)\subseteq\overline{W(T)},\  \mbox{where $sp(T)$ is the spectrum of $T$}.
\end{align*}

 A related concept is the numerical radius.   The numerical radius  of  $T\in\mathbb{B}(\mathscr{H})$ is defined by
 $$\omega(T)= \sup\{|\lambda|, \ \lambda\in  W(T)\}=\sup\{|\langle Tx,x\rangle|; \ \|x\|=1\}.$$
 Some of basic properties of the numerical radius are listed below.\\
 \textbf{Theorem B.}\ For every $T,S\in\mathbb{B}(\mathscr{H})$
 \begin{align*}
   &\mathrm{(i)}\ \omega(T)=\omega(T^*)\ \mbox{and} \ \omega(U^*TU)=\omega(T) \ \mbox{for every unitary $U\in\mathbb{B}(\mathscr{H})$};\\
   &\mathrm{(ii)}\  \frac{1}{2}\|T\|\leq \omega(T)\leq \|T\| \ \mbox{and} \ \omega(T)=\|T\| \ \mbox{if $T$ is normal};\\
    &\mathrm{(iii)}\  \omega(T\oplus S)=\max\{\omega(T),\omega(S)\};
   \end{align*}
  The numerical radius is also defined for elements of a $C^*$-algebra.  If $\mathscr{A}$ is  a unital $C^*$-algebra, the numerical radius of $A\in\mathscr{A}$ is defined by
 $$\nu(A)=\sup\{|\tau(A)|; \  \tau \ \mbox{is a state on} \ \mathscr{A}   \}.$$
The reader is referred to \cite{ Dr,GR, kit, kit2}  and references therein for more result concerning the numerical radius and the numerical range.

 \section{Matricial Range}
Let $\mathscr{A},\mathscr{B}$ be unital $C^*$-algebras and let $\mathscr{A}_+$ denotes the cone of positive elements of $\mathscr{A}$.  Recall that a mapping $\Phi:\mathscr{A}\to\mathscr{B}$ is called positive, whenever $\Phi(\mathscr{A}_+)\subseteq \mathscr{B}_+$.  Moreover, for $n\in\mathbb{N}$,  $\Phi$ is called $n$-positive if the mapping $\Phi_n:\mathbb{M}_n(\mathscr{A})\to\mathbb{M}_n(\mathscr{B})$ defined by $\Phi_n([A_{ij}])=[\Phi(A_{ij})]$ is positive. If $\Phi:\mathscr{A}\to\mathscr{B}$ is $n$-positive for every $n\in\mathbb{N}$, then $\Phi$ is called completely positive.

For $T\in \mathbb{B}(\mathscr{H})$, assume that $CP_n(T)$ is the set of all unital completely positive linear mappings from $C^*(T)$ to $\mathbb{M}_n$:
$$CP_n(T)=\{\Phi|\ \Phi:C^*(T)\to\mathbb{M}_n\ \mbox{is unital and completely positive} \},$$
in which $C^*(T)$ is the unital $C^*$-algebra generated by $T$. Arveson \cite{Ar} defined the $n$'th matricial range of an operator $T\in \mathbb{B}(\mathscr{H})$ by
 $$W^n(T)=\left\{\Phi(T)|\ \ \Phi\in CP_n(T)\right\}.$$
This is a matrix valued extension of the numerical range, say
$$W^1(T)=\overline{W(T)}.$$   It follows from the definition of $W^n(T)$ that \\
\textbf{Theorem C.}\ If $T\in\mathbb{B}(\mathscr{H})$ and   $n\in\mathbb{N}$, then
\begin{itemize}
\item[(i)] $W^n(T^*)=W^n(T)$;
\item[(ii)] $W^n(U^*TU)=W^n(T)$ for each unitay $U\in\mathbb{B}(\mathscr{H})$;
\item[(iii)] $W^n(\alpha I)=\{\alpha I_n\}$ and $W^n(\alpha T+\beta I)=\alpha W^n(T)+\beta
    I_n$ for all $\alpha,\beta\in\mathbb{C}$.
\end{itemize}

Moreover, as a non-commutative Toeplitz-Hausdorff result, it is known that $W^n(T)$  is $C^*$-convex\cite{Na}. A set $\mathcal{K}\subseteq\mathbb{B}(\mathscr{H})$ is called $C^*$-convex, if  $X_1,\ldots,X_m\in
\mathcal{K}$ and $A_1,\ldots,A_m\in\mathbb{B}(\mathscr{H})$ with $\sum_{j=1}^{m}A^*_jA_j=I$
imply  that $\sum_{j=1}^{m}A^*_jX_jA_j\in \mathcal{K}$.
Indeed, this is a noncommutative generalization of linear convexity.  It is evident that the
$C^*$-convexity of a set implies its  convexity in the usual sense.   But the converse is not
true in general. For more information about $C^*$-convexity see \cite{kian2,LP} and the
references therein.

Matricial ranges are closely connected with $C^*$-convex sets. In fact, the matrix ranges turns
out to be the compact $C^*$-convex sets. However, except in some special cases, it is not
routine to obtain the matricial ranges of an operator.  The reader is referred  to
\cite{Ar,DK,Fa,TW} and the references therein for more information about matricial ranges.

The main purpose of this note is to define an analogues of the numerical radius related to the matricial range. However, we will find   relations between the numerical radius and matricial range of an operator. The tone of the paper is mostly expository.

\section{Matricial Radius}
Similar to the connection of numerical radius and numerical range, it is natural to define the matricial radius of an operator to be the maximum norm of the elements of its matricial range. However,    As pointed out in \cite{Fa}, unlike the numerical radius, the matricial radius is
not interesting. For $T\in\mathbb{B}(\mathscr{H})$ it holds
$$\max \{\|X\|; \ X\in W^n(T)\}=\|T\|.$$

As another candidate for the  matricial radius, we consider the next definition.
 \begin{definition}
    For every operator $T\in\mathbb{B}(\mathscr{H})$  and every positive integer $n$, set
$$\nu^n(T)=\sup\{|\mathrm{Tr} X|; \ X\in W^n(T)\}=\sup\{|\mathrm{Tr}\ \Phi(T)|; \ \Phi\in CP_n(T)\},$$
 \end{definition}
where $\mathrm{Tr}(\cdot)$ denotes the canonical trace. It is easy to see that
\begin{align*}
  &\mathrm{(i)}\ \ \nu^1(T)=\nu(T);\\
  &\mathrm{(ii)}\ \ \nu^n(T^*)= \nu^n(T);\\
  &\mathrm{(iii)}\ \ \nu^n(U^*TU)= \nu^n(T)\ \mbox{for every unitary $U$}.
\end{align*}
Moreover, it can be shown that
$$\nu^n(T)\leq n\|T\|$$
and the equality holds if $T$ is normal.  Although,  $\nu^n$ has some favorite properties, it is not interesting too.
\begin{example}
  Consider
  $$T=\left[\begin{array}{cc}
   0 & 1\\ 0 &0
  \end{array}\right]\in\mathbb{M}_2,$$
  so that $\omega(T)=\frac{1}{2}$ and $\|T\|=1$. Moreover, it is known that \cite{Ar}
  $$W^n(T)=\left\{B\in\mathbb{M}_n \ ; \  \omega(B)\leq\frac{1}{2}\right\}.$$
  Therefore
  $$\nu^2(T)=1=2\ \omega(T).$$
\end{example}
We will show that  the equality $\nu^n(\cdot)=n\ \omega(\cdot)$ holds in general.
 We need some lemmas to continue our work.
\begin{lemma}\label{lm1}\cite{Ar}
  Let $S$ and $T$ be Hilbert space operators (perhaps acting on different spaces) and $S$ is normal. Then the followings are equivalent:\\
  1. $W^n(S)\subseteq W^n(T)$ \\
  2.  $sp(S)$ is contained in the closed numerical range of $T$.
\end{lemma}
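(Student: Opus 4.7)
The plan is to prove the two implications separately, using three ingredients: (a) for any operator $R$, a scalar matrix $\lambda I_n$ belongs to $W^n(R)$ whenever $\lambda\in\overline{W(R)}$, realized by $\Phi(A)=\phi(A)I_n$ with $\phi$ a state on $C^*(R)$ selecting $\lambda$; (b) the Gelfand identification $C^*(S)\cong C(\mathrm{sp}(S))$, available since $S$ is normal, which represents every $\Phi\in CP_n(S)$ by integration against an $\mathbb{M}_n^+$-valued measure; and (c) the compactness and $C^*$-convexity of $W^n(T)$.

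For $(1)\Rightarrow(2)$, I fix $\lambda\in\mathrm{sp}(S)$. Normality makes point-evaluation $\phi_\lambda$ a state on $C^*(S)$ with $\phi_\lambda(S)=\lambda$, so $A\mapsto\phi_\lambda(A)I_n$ is in $CP_n(S)$ and $\lambda I_n\in W^n(S)\subseteq W^n(T)$. Writing $\lambda I_n=\Psi(T)$ for some $\Psi\in CP_n(T)$ and composing with the normalized trace $\tfrac{1}{n}\mathrm{Tr}$ on $\mathbb{M}_n$ produces a state $\tfrac{1}{n}\mathrm{Tr}\circ\Psi$ on $C^*(T)$ whose value at $T$ equals $\lambda$, placing $\lambda$ in $\overline{W(T)}$.

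For $(2)\Rightarrow(1)$, I take $X=\Phi(S)\in W^n(S)$ and use normality to regard $\Phi$ as a UCP map $C(\mathrm{sp}(S))\to\mathbb{M}_n$, represented via Stinespring as $\Phi(f)=\int_{\mathrm{sp}(S)}f(z)\,dE(z)$ for a regular $\mathbb{M}_n^+$-valued measure $E$ with $E(\mathrm{sp}(S))=I_n$. Given $\varepsilon>0$, I partition $\mathrm{sp}(S)$ into Borel pieces $B_1,\ldots,B_m$ of diameter at most $\varepsilon$, pick $z_i\in B_i$, and a standard Stinespring-dilation estimate yields
\begin{equation*}
\Bigl\|X-\sum_{i=1}^{m} z_i\, E(B_i)\Bigr\|\;\leq\;\varepsilon.
\end{equation*}
By hypothesis $z_i\in\mathrm{sp}(S)\subseteq\overline{W(T)}$, so ingredient (a) gives $z_i I_n\in W^n(T)$, and
\begin{equation*}
\sum_{i=1}^{m} z_i\, E(B_i)\;=\;\sum_{i=1}^{m} E(B_i)^{1/2}\bigl(z_i I_n\bigr)E(B_i)^{1/2}
\end{equation*}
is a $C^*$-convex combination of elements of $W^n(T)$ (since $\sum E(B_i)=I_n$), hence lies in $W^n(T)$ by (c). Letting $\varepsilon\to 0$ and invoking closedness of $W^n(T)$ delivers $X\in W^n(T)$.

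The main obstacle is $(2)\Rightarrow(1)$: one must translate the pointwise condition $\mathrm{sp}(S)\subseteq\overline{W(T)}$ into a matrix-level statement about every UCP image of $S$. This is exactly where normality is indispensable, since it reduces an arbitrary $\Phi\in CP_n(S)$ to a UCP map out of the commutative algebra $C(\mathrm{sp}(S))$, unlocking the POVM representation and the $C^*$-convex approximation; without normality no such reduction is available and the argument collapses.
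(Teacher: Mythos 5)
The paper offers no proof of this lemma at all: it is imported verbatim from Arveson \cite{Ar} as a black box, so there is no in-paper argument to compare yours against. Judged on its own, your proof is correct and is essentially the standard Arveson-style argument. For $(1)\Rightarrow(2)$ the composition $\tfrac{1}{n}\mathrm{Tr}\circ\Psi$ is indeed a state on $C^*(T)$, and the identification of $\{\varphi(T):\varphi\ \text{a state on}\ C^*(T)\}$ with $\overline{W(T)}$ is exactly the paper's statement $W^1(T)=\overline{W(T)}$ (this silently uses the Toeplitz--Hausdorff convexity of $W(T)$ together with state extension, but both are standard and the paper already takes them for granted). For $(2)\Rightarrow(1)$ the three ingredients you isolate are the right ones: the Gelfand reduction $C^*(S)\cong C(\mathrm{sp}(S))$ is precisely the device the paper itself uses in the proof of Proposition 3.6, the POVM representation of a unital positive map out of $C(\mathrm{sp}(S))$ into $\mathbb{M}_n$ is classical, the norm bound $\bigl\|X-\sum_i z_i E(B_i)\bigr\|\leq\varepsilon$ follows from the Naimark dilation $E(\cdot)=V^*F(\cdot)V$ exactly as you indicate, and the rewriting $\sum_i z_iE(B_i)=\sum_i E(B_i)^{1/2}(z_iI_n)E(B_i)^{1/2}$ with $\sum_i E(B_i)=I_n$ is a legitimate $C^*$-convex combination, so the Narcowich--Ward $C^*$-convexity of $W^n(T)$ cited in \cite{Na} together with its compactness closes the argument. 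Your closing remark correctly identifies where normality is indispensable. The only points I would ask you to make explicit in a final write-up are (i) why the matrix-valued measure $E$ is positive on Borel sets (entrywise Riesz representation plus positivity of $\Phi$ on nonnegative continuous functions, then regularity), and (ii) why $W^n(T)$ is closed (BW-compactness of the set of unital completely positive maps); neither is a gap, just a citation to be supplied.
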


The next theorem reveals that $\nu^n$ can not be a proper extension of the numerical radius.
\begin{theorem}\label{th1}
  For every $T\in\mathbb{M}_k$
  $$\omega(T)=\frac{1}{n}\nu^n(T)\qquad (n\in \mathbb{N}).$$
\end{theorem}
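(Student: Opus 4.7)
The plan is to prove $\omega(T)=\tfrac{1}{n}\nu^n(T)$ by establishing the two inequalities $\nu^n(T)\ge n\omega(T)$ and $\nu^n(T)\le n\omega(T)$ separately. The lower bound will fall out of Lemma \ref{lm1}, while the upper bound reduces to the standard fact that the numerical radius of a matrix coincides with the supremum of $|\tau(T)|$ over all states $\tau$ on $C^*(T)$.

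For the lower bound, I would pick $\lambda\in\overline{W(T)}$ with $|\lambda|=\omega(T)$, which exists because $\overline{W(T)}$ is compact. Setting $S=\lambda I$, the operator $S$ is normal with $\mathrm{sp}(S)=\{\lambda\}\subseteq\overline{W(T)}$, so Lemma \ref{lm1} yields $W^n(\lambda I)\subseteq W^n(T)$. By Theorem~C(iii) the left-hand side equals $\{\lambda I_n\}$, so $\lambda I_n\in W^n(T)$, and consequently
$$\nu^n(T)\ge |\mathrm{Tr}(\lambda I_n)|=n|\lambda|=n\omega(T).$$
One could equally bypass Lemma \ref{lm1} by noting that, for each unit vector $x\in\mathscr{H}$, the map $A\mapsto\langle Ax,x\rangle I_n$ is UCP and has trace $n\langle Tx,x\rangle$, which directly produces the same inequality.

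For the upper bound, the central observation is that the normalized trace $\tfrac{1}{n}\mathrm{Tr}$ is a state on $\mathbb{M}_n$. Hence for any $\Phi\in CP_n(T)$ the composition $\tau:=\tfrac{1}{n}\mathrm{Tr}\circ\Phi:C^*(T)\to\mathbb{C}$ is unital and positive, i.e.\ a state on $C^*(T)$. I would then identify $\sup_\tau|\tau(T)|$ with $\omega(T)$: any state on $C^*(T)$ extends (by Hahn--Banach, with positivity automatic from $\tau(I)=1=\|\tau\|$) to a state $\widetilde\tau$ on $\mathbb{M}_k$; every state on $\mathbb{M}_k$ has the form $A\mapsto\mathrm{Tr}(\rho A)$ for a density matrix $\rho$, so $\widetilde\tau(T)$ is a convex combination of points of $W(T)$, which lies in $W(T)$ itself by Toeplitz--Hausdorff. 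Therefore $|\tau(T)|\le\omega(T)$, and multiplying by $n$ gives $|\mathrm{Tr}\,\Phi(T)|\le n\omega(T)$ for every $\Phi\in CP_n(T)$, so $\nu^n(T)\le n\omega(T)$.

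The main obstacle is the upper bound, and within it the identification of the $C^*$-algebraic numerical radius $\nu(T)$ with the classical $\omega(T)$; once this is granted the rest is a one-line state-composition argument. The lower bound is essentially immediate from Arveson's lemma together with Theorem~C(iii).
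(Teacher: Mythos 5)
Your proof is correct, and your upper bound follows a genuinely different route from the paper's. The paper proves $|\mathrm{Tr}\,\Phi(T)|\le n\,\omega(T)$ by invoking Arveson's extension theorem to extend $\Phi$ to all of $\mathbb{M}_k$, then applying the Stinespring dilation $\widetilde\Phi(A)=V^*\pi(A)V$ with $\pi(A)=A\oplus\cdots\oplus A$, and estimating the trace term by term against $\omega(\pi(T))=\omega(T)$. You instead observe that $\tfrac{1}{n}\mathrm{Tr}\circ\Phi$ is a state on $C^*(T)$ and reduce everything to the identity $\nu(T)=\omega(T)$ for matrices, which you establish via Hahn--Banach extension of states, the density-matrix form of states on $\mathbb{M}_k$, and convexity of $W(T)$. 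Your argument is more elementary (no Stinespring, and only the scalar Hahn--Banach extension rather than Arveson's CP extension theorem) and, notably, uses only positivity of $\Phi$ rather than complete positivity; what the paper's dilation argument buys in exchange is a template that generalizes beyond trace functionals. On the lower bound you follow essentially the paper's route through Lemma \ref{lm1} and Theorem~C(iii), but you are actually more careful than the paper: the paper sets $S=\omega(T)I$ and asserts $\mathrm{sp}(S)=\{\omega(T)\}\subseteq\overline{W(T)}$, which can fail (e.g.\ for $T=-I$ one has $\omega(T)=1\notin\overline{W(T)}=\{-1\}$), whereas your choice of $\lambda\in\overline{W(T)}$ with $|\lambda|=\omega(T)$ repairs this; your alternative via the UCP maps $A\mapsto\langle Ax,x\rangle I_n$ is also valid and is in fact the device the paper itself uses in the proof of Proposition \ref{p1}.
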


\begin{proof}
Assume that $\Phi:C^*(T)\to\mathbb{M}_n$ is a unital completely positive linear mapping. The Arveson’s extension theorem (see for example \cite[Theorem 3.1.5]{Bh}) guarantees the existence of a  unital completely positive linear mapping $ \widetilde{\Phi}:\mathbb{M}_k\to\mathbb{M}_n$, which is an extension of $\Phi$. Moreover, the Stinespring theorem (See \cite[Theorem 3.1.2]{Bh}) yields that $ \widetilde{\Phi}(A)=V^*\pi(A)V$ in which $V:\mathbb{C}^n\to\mathbb{C}^{k^2n}$ and $V^*V=I$ and $\pi:\mathbb{M}_k\to\mathbb{M}_{k^2n}$ is an $*$-homomorphism so that $\pi(A)=A\oplus\cdots\oplus A$. Now, assume that $\{u_1,\cdots,u_n\}$ is an orthonormal system of eigenvectors for $ \widetilde{\Phi}(T)$. Then $Vu_j$ \ $ (j=1,\cdots, n)$ are unit vectors in $\mathbb{C}^{k^2n}$. Therefore
\begin{align*}
|\mathrm{Tr}\ \Phi(T)|=|\mathrm{Tr}\  \widetilde{\Phi}(T)|&=\left|\sum_{j=1}^{n}\langle \widetilde{\Phi}(T)u_j,u_j\rangle \right|\\
&=\left|\sum_{j=1}^{n}\langle V^*\pi(T)Vu_j,u_j\rangle \right|\\
&\leq \sum_{j=1}^{n}|\langle \pi(T)Vu_j,Vu_j\rangle |\\
&\leq \sum_{j=1}^{n} \omega(\pi(T))\\
&=n\ \omega(T\oplus\cdots\oplus T)\\
&=n\ \omega(T),
\end{align*}
where the last inequality follows from $\mathrm{(iii)}$ of Theorem B. Taking supremum over all $\Phi$, we conclude that
\begin{align}\label{q3}
 \nu^n(T)\leq n \ \omega(T).
\end{align}

Furthermore, let $T\in \mathbb{M}_k$. Put $S=\omega(T) I$ so that $S$ is normal and $W^n(S)=\{\omega(T) I_n\}$  by  $\mathrm{(iii)}$ of Theorem C. Moreover,
$sp(S)=\{\omega(T)\}\subseteq \overline{W(T)}$. Lemma \ref{lm1} then implies that $W^n(S)\subseteq W^n(T)$ and so $\nu^n(S)\leq \nu^n(T)$.  Therefore
\begin{align}\label{q4}
n\ \omega (T)=\nu^n(S)\leq \nu^n(T).
\end{align}
The result now follows from \eqref{q3} and \eqref{q4}.
\end{proof}


The next definition provide another choice for the matricial radius.
\begin{definition}\label{d2}
  For every $T\in\mathbb{B}(\mathscr{H})$
  $$\omega^n(T)=\sup\{\mathrm{Tr}\ |\Phi(T)|; \ \Phi\in CP_n(T)\}=\sup\{\|X\|_1; \ X\in W^n(T)\}.$$
\end{definition}
It is easy to see that
\begin{align*}
  &\mathrm{(i)}\ \omega^1(T)=\nu(T);\\
  &\mathrm{(ii)}\ \omega^n(T^*)= \omega^n(T);\\
  &\mathrm{(iii)}\ \omega^n(U^*TU)= \omega^n(T)\ \mbox{for every unitary $U$}.
\end{align*}
Moreover,  the following desirable property holds for $\omega^n$.
\begin{proposition}\label{p1}
  For every $T\in\mathbb{B}(\mathscr{H})$
  $$\omega^n(T)\leq n\|T\|\qquad (n\in \mathbb{N}). $$
  If $T$ is normal, then equality holds.
\end{proposition}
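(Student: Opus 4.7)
The plan is to separate the proof into the universal upper bound and the attainment of equality in the normal case, using the tools already collected in the paper.

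For the upper bound, I would take any $\Phi\in CP_n(T)$ and look at $X=\Phi(T)\in W^n(T)$. The standard fact that a unital completely positive (in fact even a unital positive) linear map between $C^*$-algebras is contractive gives $\|X\|=\|\Phi(T)\|\le\|T\|$. Then for any element $X\in\mathbb{M}_n$ the trace norm is controlled by the operator norm via $\|X\|_1=\sum_{i=1}^{n}s_i(X)\le n\,s_1(X)=n\|X\|$, since each singular value is bounded by the largest. Combining the two estimates yields $\|X\|_1\le n\|T\|$, and taking the supremum over $X\in W^n(T)$ gives
\begin{equation*}
\omega^n(T)\le n\|T\|.
\end{equation*}

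For the equality when $T$ is normal, the strategy is to exhibit a single element of $W^n(T)$ whose trace norm already saturates the bound. Because $T$ is normal, $\|T\|$ equals the spectral radius, so there exists $\lambda\in sp(T)$ with $|\lambda|=\|T\|$. Now set $S=\lambda I$: this operator is normal with $sp(S)=\{\lambda\}\subseteq sp(T)\subseteq\overline{W(T)}$. Lemma \ref{lm1} then applies and gives $W^n(S)\subseteq W^n(T)$; since Theorem C(iii) identifies $W^n(S)=\{\lambda I_n\}$, we obtain $\lambda I_n\in W^n(T)$. Consequently
\begin{equation*}
\omega^n(T)\ge \|\lambda I_n\|_1=n|\lambda|=n\|T\|,
\end{equation*}
and together with the first inequality this proves equality.

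I do not anticipate a real obstacle here: the argument is essentially a repackaging of the normal-dilation trick already used in Theorem \ref{th1}, the only new ingredients being the elementary comparison $\|\cdot\|_1\le n\|\cdot\|$ on $\mathbb{M}_n$ and the contractivity of unital completely positive maps. The only point that deserves care is making sure the contractivity is invoked correctly (via positivity of $\Phi$ applied to $\|T\|^2 I\pm\ldots$, or simply by citing the standard Russo--Dye / Stinespring consequence), since $T$ may be non-self-adjoint.
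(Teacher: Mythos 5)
Your proof is correct, but both halves take a genuinely different route from the paper. For the upper bound, the paper does not invoke contractivity directly: it derives the operator inequality $|\Phi(T)|\leq \|T\|\,I_n$ from the Kadison--Schwarz inequality $\Phi(T)^*\Phi(T)\leq\|\Phi\|\,\Phi(T^*T)$ together with the L\"{o}wner--Heinz inequality and $|T|^2\leq\|T\|^2 I$, and then takes traces. Your version --- contractivity of unital (completely) positive maps plus the elementary comparison $\|X\|_1\leq n\|X\|$ on $\mathbb{M}_n$ --- is logically equivalent (for a matrix $X$, $|X|\leq cI_n$ is the same as $\|X\|\leq c$) but shorter, at the cost of citing Russo--Dye or Stinespring rather than staying with the Schwarz inequality already implicit in the paper. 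For the equality case the divergence is more substantial: the paper argues that, since $C^*(T)$ is commutative for normal $T$, every positive map $\Phi_x(Z)=\langle Zx,x\rangle I_n$ is automatically completely positive, which yields $\omega^n(T)\geq n\,|\langle Tx,x\rangle|$ for every unit vector $x$ and hence $\omega^n(T)\geq n\,\omega(T)=n\|T\|$. You instead pick $\lambda\in sp(T)$ with $|\lambda|=\|T\|$ and use Lemma \ref{lm1} together with Theorem C(iii) to place $\lambda I_n$ inside $W^n(T)$, exactly as in the lower-bound step of Theorem \ref{th1}. Your variant is arguably cleaner here, since $\lambda\in sp(T)\subseteq\overline{W(T)}$ makes the hypothesis of Lemma \ref{lm1} immediate, whereas the paper's construction has the side benefit of exhibiting a large explicit family of elements of $W^n(T)$ and of isolating the general inequality $\omega^n(T)\geq n\,\omega(T)$ before specializing to normal $T$.
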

\begin{proof}
  It is not hard to see that if $\Phi$ is completely positive, then
  \begin{align}\label{q11}
\Phi(T)^*\Phi(T)\leq \|\Phi\|\Phi(T^*T).
  \end{align}
  Noting that $\|\Phi\|=\|\Phi(I)\|=1$ and using the L\"{o}wner--Heinz inequality,  \eqref{q11}  implies that
  \begin{align}\label{q1}
|\Phi(T)|\leq \Phi\left(|T|^2\right)^{1/2}
  \end{align}
   for every unital completely positive linear mapping $\Phi$.   Moreover,
  $$|T|^2\leq \| T\|^2 I.$$
  Now assume that $\Phi:C^*(T)\to\mathbb{M}_n$ is a unital completely positive linear mapping. It follows from the last inequality that
  \begin{align}\label{q2}
    \Phi\left(|T|^2\right)^{1/2}\leq \| T\|\ I_n.
  \end{align}
  From \eqref{q1} and \eqref{q2} we get
  $$|\Phi(T)|\leq \| T\|\ I_n$$
  and so
  $$\mathrm{Tr}\ |\Phi(T)|\leq n\ \| T\|.$$
  This concludes the inequality $\omega^n(T)\leq n\|T\|$ for every $T\in\mathbb{B}(\mathscr{H})$.

Now assume that $T$ is normal. Then the Gelfand mapping $\Gamma:C^*(T)\to C(sp(T))$ is an isometric $*$-isomorphism, where $C(sp(T))$ is the $C^*$-algebra of all continuous functions on $sp(T)$. Consider two facts:\\
1. Every positive linear mapping $\Phi:C(\Omega)\to\mathscr{A}$ is completely positive for each arbitrary $C^*$-algebra $\mathscr{A}$\cite{Bh};\\
2. The composition of every two completely positive linear mapping is completely positive too.

Every positive linear mapping  $\Phi:C^*(T)\to\mathbb{M}_n$  can be written as $\Phi= \Psi o \Gamma$, where $\Psi=\Phi o \Gamma^{-1}:C(sp(T))\to \mathbb{M}_n$. Therefore, every positive linear mapping $\Phi:C^*(T)\to\mathbb{M}_n$ is completely positive.

Now let $x\in\mathscr{H}$ be a unit vector. The linear mapping $\Phi_x:C^*(T)\to\mathbb{M}_n$ defined by $\Phi(Z)=\langle Zx,x\rangle I_n$ is   positive and so is completely positive. Therefore,
$$\omega^n(T)\geq \mathrm{Tr} \ |\Phi_x(T)|= \mathrm{Tr} \ |\langle Tx,x\rangle I_n |=n |\langle Tx,x\rangle|,$$
whence
$$\omega^n(T)\geq n \ \omega(T)=n\|T\|.$$
\end{proof}
Proposition \ref{p1} gives an extension of $\mathrm{(ii)}$ of Theorem B.   Note that there exists other   norms on $\mathbb{M}_n$ which can be used in Definition \ref{d2} rather than $\|\cdot\|_1$. Typical norms on  $\mathbb{M}_n$ are
$$\|A\|_p=\mathrm{Tr}\left(|A|^p\right)^{1/p}\quad \mbox{and}\quad \|A\|=\lim_{p\to\infty}\|A\|_p\qquad (A\in\mathbb{M}_n)$$
in which $\|A\| $ is the operator norm. Except when $p=1$, Proposition \ref{p1} does not hold in general.
To see this, consider  the unilateral shift operator defined on a separable Hilbert space  by $Te_j=e_{j+1}$ ($j\geq1$).  It is known that \cite{Fa}
\begin{align*}
W^n(T)=\{B\in\mathbb{M}_n\ ; \ B^*B\leq I_n\}.
  \end{align*}
Therefore,
$$\omega^n(T)=n=n\|T\|.$$
Considering the $p$-norm ($p\neq 1$) in Definition \ref{d2} concludes
$$\sup\{\|X\|_p; \ X\in W^n(T)\}=\sqrt[p]{n}\neq n\ \|T\|.$$

Unfortunately, Definition \ref{d2} can not be a proper extension of  the numerical radius too.

\begin{theorem}
For every $T\in\mathbb{M}_k$
  $$\omega(T)=\frac{1}{n}\omega^n(T)\qquad (n\in \mathbb{N}).$$
\end{theorem}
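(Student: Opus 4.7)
The plan is to mirror the proof of Theorem \ref{th1}, establishing the two inequalities $\omega^n(T) \leq n\omega(T)$ and $\omega^n(T) \geq n\omega(T)$ separately, but with new ingredients tailored to the trace norm $\|\cdot\|_1$ in place of the bare trace.

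For the upper bound, I would again invoke the Arveson extension theorem and Stinespring's theorem to realize each $\Phi \in CP_n(T)$ as $\widetilde{\Phi}(A) = V^*\pi(A)V$ with $V^*V = I_n$ and $\pi(A) = A \oplus \cdots \oplus A$. The extra ingredient is the duality formula $\|B\|_1 = \sup\{|\mathrm{Tr}(U^*B)| : U \in \mathbb{M}_n \text{ unitary}\}$, which follows from the polar decomposition of $B$. Choosing a unitary $U$ that attains this supremum for $B = \widetilde{\Phi}(T)$ and diagonalizing $U$ in an orthonormal basis $\{e_j\}$ of eigenvectors with eigenvalues $\alpha_j$ of modulus one, I would then estimate
\begin{align*}
\|\widetilde{\Phi}(T)\|_1
= \Bigl|\sum_{j=1}^n \bar\alpha_j\,\langle \pi(T)Ve_j, Ve_j\rangle\Bigr|
\leq \sum_{j=1}^n \omega(\pi(T)) = n\,\omega(T),
\end{align*}
using that each $Ve_j$ is a unit vector in $\mathbb{C}^{k^2 n}$ and that $\omega(\pi(T)) = \omega(T \oplus \cdots \oplus T) = \omega(T)$ by Theorem B(iii). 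Taking the supremum over $\Phi$ then gives $\omega^n(T) \leq n\omega(T)$.

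For the lower bound, I would reuse the construction from the proof of Proposition \ref{p1}: for each unit vector $x\in\mathscr{H}$, the map $\Phi_x(Z) = \langle Zx, x\rangle I_n$ is unital and completely positive. Since $\Phi_x(T)$ is a scalar multiple of $I_n$, its modulus is $|\Phi_x(T)| = |\langle Tx, x\rangle| I_n$, so $\mathrm{Tr}\,|\Phi_x(T)| = n|\langle Tx, x\rangle|$. Taking the supremum over unit vectors $x$ yields $\omega^n(T) \geq n\omega(T)$ and finishes the proof.

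The one point requiring care is the duality step: moving from the bare $|\mathrm{Tr}\,\widetilde{\Phi}(T)|$ of Theorem \ref{th1} to $\|\widetilde{\Phi}(T)\|_1$ introduces an extra unitary $U$ which must be absorbed without losing a factor, and it is precisely the spectral decomposition of $U$ (available because $U$ is normal) that allows the Theorem \ref{th1} argument to transfer essentially verbatim.
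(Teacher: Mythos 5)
Your proof is correct, but it takes a genuinely different route from the paper's. For the upper bound the paper does not dilate at all: it quotes the inequality $\|A\|_1\le n\,\omega(A)$ for $A\in\mathbb{M}_n$ from \cite{kian} together with the nesting property $W^m(W^n(T))\subseteq W^m(T)$ from \cite{Fa} (so that $\omega(X)\le\omega(T)$ for every $X\in W^n(T)$), and simply concatenates the two. You instead re-run the Arveson--Stinespring argument of Theorem \ref{th1} and absorb the trace norm through the duality $\|B\|_1=\sup\{|\mathrm{Tr}(U^*B)|:U\ \mbox{unitary}\}$; the observation that the unimodular eigenvalues $\bar\alpha_j$ of $U^*$ disappear under the triangle inequality is exactly the right point, and the estimate is sound because $V^*V=I_n$ makes each $Ve_j$ a unit vector in the dilation space. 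Your route is self-contained and frees the theorem from the two external citations; the paper's route is shorter and, as a by-product, also delivers the equality $\omega(T)=\sup\{\omega(X):X\in W^n(T)\}$ recorded in the closing Remark. For the lower bound the paper reuses the Lemma \ref{lm1} argument with the normal operator $S=\omega(T)I$, whereas you exhibit the explicit maps $\Phi_x(Z)=\langle Zx,x\rangle I_n$; your version is arguably cleaner (it sidesteps the delicate point of whether the nonnegative number $\omega(T)$ itself, rather than merely some $\lambda_0\in\overline{W(T)}$ with $|\lambda_0|=\omega(T)$, lies in $\overline{W(T)}$). One small caveat: justify that $\Phi_x$ is completely positive for \emph{arbitrary} $T$ by noting it is a vector state followed by the unital $*$-homomorphism $\lambda\mapsto\lambda I_n$, since the normality argument used for this map in Proposition \ref{p1} is not available here.
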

\begin{proof}
It is known that (see \cite[Theorem 3.7]{kian})
$$\|A\|_1\leq n\ \omega(A)\qquad (A\in\mathbb{M}_n).$$
Moreover, for $T\in\mathbb{M}_k$, it is known that $W^m(W^n(T))\subseteq W^m(T)$ for all $m,n\in\mathbb{N}$ \cite{Fa}, i.e., if $A\in W^n(T)$, then $W^m(A)\subseteq W^m(T)$. Therefore, $\omega(A)\leq \omega(T)$. It follows that
$$\|X\|_1\leq n \ \omega(X)\leq n \ \omega(T)\qquad (X\in W^n(T)),$$
whence
$$\omega^n(T)\leq n \ \omega(T).$$
 Furthermore,  applying  an argument as in the last part of the proof of Theorem \ref{th1} shows that $$n \ \omega(T)\leq \omega^n(T).$$
This completes the proof.
\end{proof}
\begin{example}
  Assume that $r>0$ and
   $$T=\left[\begin{array}{cc}
   0 & r\\ 0 &0
  \end{array}\right]\in\mathbb{M}_2,$$
  so that $\omega(T)=\frac{r}{2}$ and $\|T\|=r$ and
  $$W^n(T)=\left\{B\in\mathbb{M}_n \ ; \  \omega(B)\leq\frac{r}{2}\right\}.$$
We have
\begin{align}\label{nr}
\omega^n(T)=\sup\{\|X\|_1; \ X\in W^n(T)\}\leq n \sup\{\omega(X); \ X\in W^n(T)\}\leq \frac{nr}{2}=n\omega(T).
\end{align}
  Moreover, put $Y=\frac{r}{2}I_n\in W^n(T)$ and then
  $$\omega^n(T)=\sup\{\|X\|_1; \ X\in W^n(T)\}\geq \|Y\|_1=\frac{nr}{2}=n\omega(T),$$
  whence, $$\omega^n(T)=n\omega(T).$$
\end{example}
\begin{remark}
  First, we can not find a suitable extension of the numerical radius based on the matricial range. So, we would like to pose this question that is there such an extension. Second, we obtain some relations of the numerical radius of an operator  with its matricial range. In particular,
  \begin{align*}
    \omega(T)=&=\frac{1}{n}\sup\left\{|\mathrm{Tr}\ X|;\ X\in W^n(T)\right\}\\
&=\frac{1}{n}\sup\left\{\|X\|_1;\ X\in W^n(T)\right\}\\
&= \sup\left\{ \omega(X);\ X\in W^n(T)\right\}.
\end{align*}
The last equality follows from \eqref{nr}.
\end{remark}


\bibliographystyle{amsplain}

\end{document}